\newtheorem{thm}{Theorem}[section]
\newtheorem{cor}[thm]{Corollary}
\newtheorem{lem}[thm]{Lemma}
\theoremstyle{definition}
\theoremstyle{remark}
\newtheorem{rem}[thm]{Remark}
\theoremstyle{conclusion}
\theoremstyle{conjecture}
\numberwithin{equation}{section}
\newcommand{\eps}{\varepsilon}
\begin{document}
\title[Uniform a priori estimates for higher order Lane-Emden equations]{Uniform a priori estimates for positive solutions of higher order Lane-Emden equations in $\mathbb{R}^n$}
\author{Wei Dai}
\author{Thomas Duyckaerts}

\address{Institut Universitaire de France, and LAGA, UMR 7539, Institut Galil\'{e}e, Universit\'{e} Paris 13, 93430 - Villetaneuse, France}
\email{duyckaer@math.univ-paris13.fr}

\address{School of Mathematics and Systems Science, Beihang University (BUAA), Beijing 100083, P. R. China, and LAGA, UMR 7539, Institut Galil\'{e}e, Universit\'{e} Paris 13, 93430 - Villetaneuse, France}
\email{weidai@buaa.edu.cn}

\thanks{W. Dai is supported by the Fundamental Research Funds for the Central Universities and the State Scholarship Fund of China (No. 201806025011). T. Duyckaerts is supported by the Institut Universitaire de France}

\begin{abstract}
In this paper, we study the existence of uniform a priori estimates for positive solutions to Navier problems of higher order Lane-Emden equations
\begin{equation*}
  (-\Delta)^{m}u(x)=u^{p}(x), \qquad \,\, x\in\Omega
\end{equation*}
for all large exponents $p$, where $\Omega\subset\mathbb{R}^{n}$ is a star-shaped or strictly convex bounded domain with $C^{2m-2}$ boundary, $n\geq4$ and $2\leq m\leq\frac{n}{2}$. Our results extend those of previous authors for second order $m=1$ to general higher order cases $m\geq2$.
\end{abstract}
\maketitle {\small {\bf Keywords:} Uniform a priori estimates, higher order Lane-Emden equations, Navier problems, positive solutions, Pohozaev identity, blow up. \\

{\bf 2010 MSC} Primary: 35B45; Secondary: 35J40, 35J91.}

\section{Introduction}

In this paper, we investigate the the following higher order Lane-Emden equations in bounded domain with Navier boundary conditions:
\begin{equation}\label{NHLE}\\\begin{cases}
(-\Delta)^{m}u(x)=u^{p}(x), \,\,\,\,\,\,\, u(x)>0, \,\,\,\,\,\,\,\, x\in\Omega, \\
u(x)=(-\Delta)u(x)=\cdots=(-\Delta)^{m-1}u(x)\equiv0,\,\,\,\,\,\,\,\, x\in\partial\Omega,
\end{cases}\end{equation}
where $1<p<+\infty$, $n\geq2$, $1\leq m\leq\frac{n}{2}$ and $\Omega\subset\mathbb{R}^{n}$ is a bounded domain with $C^{2m-2}$ boundary $\partial\Omega$. We assume the positive solutions $u\in C^{2m}(\Omega)\cap C^{2m-2}(\overline{\Omega})$.

The Lane-Emden equations of type \eqref{NHLE} have numerous important applications in conformal geometry and Sobolev inequalities. It also models many phenomena in mathematical physics and in astrophysics (see \cite{C,H}). We say equations \eqref{NHLE} have critical order if $m=\frac{n}{2}$ and non-critical order if $m<\frac{n}{2}$. The nonlinear terms in \eqref{NHLE} is called critical if $p=p_{c}:=\frac{n+2m}{n-2m}$ ($:=\infty$ if $m=\frac{n}{2}$) and subcritical if $1<p<p_{c}$.

When $m=1$, Ambrosetti and Rabinowitz \cite{AR} derived the existence of \emph{least energy} positive solution to \eqref{NHLE} for $1<p<p_{c}$ via variational minimization methods. When $m\geq2$, Chen, Fang and Li \cite{CFL}, Dai, Peng and Qin \cite{DPQ} (for $m<\frac{n}{2}$), Chen, Dai and Qin \cite{CDQ} (for $m=\frac{n}{2}$) derived a priori estimates, and hence existence of positive solutions to \eqref{NHLE} (via the Leray-Schauder fixed point theorem) for some restricted subranges of $p$ in $(1,p_{c})$. Subsequently, in \cite{DQ0} (for $m<\frac{n}{2}$) and \cite{DQ2} (for $m=\frac{n}{2}$), Dai and Qin established a priori estimates and existence of positive solutions for all $p\in\left(1,p_c\right)$, moreover, the positive solution $u$ to \eqref{NHLE} satisfies
\begin{equation}\label{lower-bound}
  \|u\|_{L^{\infty}(\overline{\Omega})}\geq\left(\frac{\sqrt{2n}}{diam\,\Omega}\right)^{\frac{2m}{p-1}}.
\end{equation}
The lower bounds \eqref{lower-bound} on the $L^{\infty}$-norm of positive solutions $u$ indicate that, if $diam\,\Omega<\sqrt{2n}$, then the $L^{\infty}$-norm must blow up as $p\rightarrow1+$.

\subsection{The non-critical order cases $1\leq m<\frac{n}{2}$.}

We first consider the non-critical order cases $1\leq m<\frac{n}{2}$. Using what is now known as Pohozaev identities, Pohozaev \cite{P} has shown that there are no positive solutions in the range $p_{c}<p<+\infty$ provided $\Omega$ is star-shaped. Han \cite{Han} and Rey \cite{R} proved that the $L^{\infty}$-norm of positive solutions of \eqref{NHLE} with $m=1$ blows up as $p\rightarrow p_{c}-$, in addition, they have also obtained the precise asymptotic behaviour for the least-energy solutions. Di \cite{D} established similar results as in \cite{Han,R} for the bi-harmonic case $m=2$ and strictly convex domain $\Omega$.

In this paper, we will prove that, if $\Omega$ is a star-shaped domain, the $L^{\infty}$-norm of positive solutions of \eqref{NHLE} with general $2\leq m<\frac{n}{2}$ blows up as $p\rightarrow p_{c}-$.

First, we will deduce a Pohozaev type variational identities (see \cite{EFJ,P,PS,PS1}) for the following generalized higher order Navier problems:
\begin{equation}\label{gNHLE}\\\begin{cases}
(-\Delta)^{m}u(x)=f(u(x)), \,\,\,\,\,\,\, u(x)\geq0, \,\,\,\,\,\,\,\, x\in\Omega, \\
u(x)=(-\Delta)u(x)=\cdots=(-\Delta)^{m-1}u(x)\equiv0,\,\,\,\,\,\,\,\, x\in\partial\Omega,
\end{cases}\end{equation}
where $n\geq2$, $m\geq1$, the function $f:\,\overline{\mathbb{R}_{+}}\rightarrow\overline{\mathbb{R}_{+}}$ is continuous and $\Omega\subset\mathbb{R}^{n}$ is a bounded domain with $C^{2m-2}$ boundary $\partial\Omega$.

The following variational identities are valid for higher order Navier problems \eqref{gNHLE}.
\begin{thm}\label{Pohozaev}
Suppose $u\in C^{2m}(\Omega)\cap C^{2m-2}(\overline{\Omega})$ is a nonnegative solution to \eqref{gNHLE}, then it satisfies the following identity:
\begin{eqnarray}\label{P}
   &&\int_{\Omega}\left[nF(u)-\frac{n-2m}{2}f(u)u\right]dx=\sum_{k=1}^{[\frac{m}{2}]}\int_{\partial\Omega}\left|\nabla(-\Delta)^{m-k}u\right|\,\left|\nabla(-\Delta)^{k-1}u\right|
  \left((x-x_{0})\cdot\nu\right)d\sigma_{x} \\
 \nonumber &&\qquad\qquad\qquad\qquad\qquad\qquad\qquad\quad
 +\left\{\frac{m}{2}\right\}\int_{\partial\Omega}\left|\nabla(-\Delta)^{\frac{m-1}{2}}u\right|^{2}\left((x-x_{0})\cdot\nu\right)d\sigma_{x},
\end{eqnarray}
where $x_{0}\in\Omega$ is arbitrary, $F(u):=\int_{0}^{u}f(t)dt$, $\nu$ denotes the unit outward normal vector at $x\in\partial\Omega$, $[a]$ denotes the largest integer lesser or equal to $a$ and $\{a\}:=a-[a]$.
\end{thm}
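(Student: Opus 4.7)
The plan is to test the equation against the generator of dilations $v:=(x-x_0)\cdot\nabla u$ and integrate over $\Omega$. For the right-hand side, rewriting $f(u)\,v=(x-x_0)\cdot\nabla F(u)$ and applying the divergence theorem yields $\int_\Omega f(u)\,v\,dx=-n\int_\Omega F(u)\,dx$, because the Navier condition $u\equiv 0$ on $\partial\Omega$ forces $F(u)\equiv 0$ there.

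For the left-hand side, the key algebraic ingredient is the commutator identity $(-\Delta)^{j}\bigl((x-x_0)\cdot\nabla\phi\bigr)=(x-x_0)\cdot\nabla(-\Delta)^{j}\phi+2j\,(-\Delta)^{j}\phi$, proved by induction from $\Delta(x\cdot\nabla\phi)=x\cdot\nabla\Delta\phi+2\Delta\phi$. I would apply Green's second identity iteratively to write
\[ \int_{\Omega}(-\Delta)^{m}u\cdot v\,dx=\int_{\Omega}(-\Delta)^{m-i}u\cdot(-\Delta)^{i}v\,dx+\sum_{l=1}^{i}B_{l}, \]
where at each step $l$ the Navier condition $(-\Delta)^{m-l}u\equiv 0$ on $\partial\Omega$ kills one half of the Green's boundary pair, leaving $B_{l}=-\int_{\partial\Omega}\partial_{\nu}(-\Delta)^{m-l}u\cdot(-\Delta)^{l-1}v\,d\sigma_x$. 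On $\partial\Omega$, the commutator with $\phi=u$ together with $(-\Delta)^{l-1}u\equiv 0$ drops the $2(l-1)$-multiple summand and makes $\nabla(-\Delta)^{l-1}u$ purely normal, collapsing $(-\Delta)^{l-1}v$ to $((x-x_0)\cdot\nu)\,\partial_\nu(-\Delta)^{l-1}u$, so
\[ B_{l}=-\int_{\partial\Omega}\partial_{\nu}(-\Delta)^{m-l}u\cdot\partial_{\nu}(-\Delta)^{l-1}u\,\bigl((x-x_0)\cdot\nu\bigr)\,d\sigma_x. \]

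I iterate $[m/2]$ times. For $m=2j$ even, the commutator and $(-\Delta)^{j}u\equiv 0$ on $\partial\Omega$ collapse the remaining interior integral to $(2j-n/2)\int_\Omega((-\Delta)^{j}u)^{2}\,dx$; a $j$-fold boundary-free Green identity gives $\int_\Omega((-\Delta)^{j}u)^{2}\,dx=\int_\Omega uf(u)\,dx$, producing $-\frac{n-2m}{2}\int_\Omega uf(u)\,dx$ with no extra surface contribution. For $m=2j+1$ odd, one further integration by parts on $\int_\Omega(-\Delta)^{j+1}u\cdot(-\Delta)^{j}v\,dx$, combined with the commutator, produces the same bulk term plus the extra surface term $-\tfrac{1}{2}\int_{\partial\Omega}|\nabla(-\Delta)^{j}u|^{2}((x-x_0)\cdot\nu)\,d\sigma_x$, matching the $\{m/2\}=\tfrac{1}{2}$ coefficient in \eqref{P}.

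It remains to convert each $-B_{l}$ into $|\nabla(-\Delta)^{m-l}u|\,|\nabla(-\Delta)^{l-1}u|\bigl((x-x_0)\cdot\nu\bigr)$. A cascading application of the maximum principle, run downward on $k$ from $m-1$ to $0$, gives $(-\Delta)^{k}u\geq 0$ in $\Omega$ for every $0\leq k\leq m-1$: each such function is superharmonic with zero Navier datum, using $f(u)\geq 0$ and the preceding step of the cascade. This forces $\partial_\nu(-\Delta)^{k}u\leq 0$ on $\partial\Omega$, so the two factors in every $-B_{l}$ share the same sign, while the zero Navier trace reduces $|\nabla(-\Delta)^{k}u|$ to $|\partial_\nu(-\Delta)^{k}u|$ on $\partial\Omega$, giving the desired form. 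The main obstacle I anticipate is the bookkeeping in the odd-$m$ case: verifying that the $2j$ arising from the commutator combines with the $(2-n)/2$ produced by the final integration by parts to yield exactly $-(n-2m)/2$, with the residual surface coefficient equal to $\{m/2\}$.
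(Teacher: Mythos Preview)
Your proposal is correct and follows essentially the same route as the paper: both test the equation against $(x-x_0)\cdot\nabla u$, use the maximum principle cascade to identify $\partial_\nu(-\Delta)^{k}u$ with $-|\nabla(-\Delta)^{k}u|$ on $\partial\Omega$, and then iterate integration by parts with an even/odd split at the end. The only organizational difference is that the paper carries the recursion on $\int_{\Omega}(-\Delta)^{m-k}u\,\bigl((x-x_0)\cdot\nabla(-\Delta)^{k}u\bigr)\,dx$, so that the contribution $2\int_{\Omega}uf(u)\,dx$ appears once per step, whereas you first shift Laplacians onto $v$ via Green's second identity and collect all the bulk terms in one stroke from the commutator $(-\Delta)^{j}\bigl((x-x_0)\cdot\nabla u\bigr)=(x-x_0)\cdot\nabla(-\Delta)^{j}u+2j(-\Delta)^{j}u$; your packaging is slightly cleaner but the underlying computation is identical.
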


As a consequence of the Pohozaev type variational identities, we can deduce Liouville theorem for higher order Navier problems \eqref{gNHLE}. Liouville type results for fractional and higher order H\'{e}non-Hardy equations in balls with Dirichlet or Navier boundary condtions have been established in \cite{DQ0} by developing the method of scaling spheres. For Liouville theorems on higher order Dirichlet problems via Pohozaev type variational identities, please refer to \cite{EFJ,P,PS,PS1}.

Our Liouville type result for Navier problem \eqref{gNHLE} is the following corollary.
\begin{cor}\label{cor0}
Assume that the function $f$ satisfies $nF(t)-\frac{n-2m}{2}f(t)t\leq0$, and that $\Omega$ is star-shaped. Then Navier problem \eqref{gNHLE} has no nontrivial nonnegative solution.
\end{cor}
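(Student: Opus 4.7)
The plan is to apply the Pohozaev identity \eqref{P} of Theorem \ref{Pohozaev} directly and then exploit the opposite signs of its two sides forced by the hypotheses. Fix a point $x_{0}\in\Omega$ with respect to which $\Omega$ is star-shaped, so that $(x-x_{0})\cdot\nu\geq 0$ on $\partial\Omega$, and let $u\in C^{2m}(\Omega)\cap C^{2m-2}(\overline{\Omega})$ be a nonnegative solution of \eqref{gNHLE}.

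The key observation is that, after substituting $u$ into \eqref{P}, the two sides carry opposite signs. By the hypothesis $nF(t)-\frac{n-2m}{2}f(t)t\leq 0$ for $t\geq 0$, the left-hand side is $\leq 0$. On the right-hand side, each term in the sum is the boundary integral of the nonnegative factor $|\nabla(-\Delta)^{m-k}u|\,|\nabla(-\Delta)^{k-1}u|$ multiplied by $(x-x_{0})\cdot\nu\geq 0$, while the residual term (present only when $m$ is odd) integrates $|\nabla(-\Delta)^{(m-1)/2}u|^{2}(x-x_{0})\cdot\nu$, which is manifestly nonnegative. Hence both sides of \eqref{P} must vanish.

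From the vanishing of the left-hand side and the pointwise sign hypothesis we deduce $nF(u(x))=\frac{n-2m}{2}u(x)f(u(x))$ pointwise in $\Omega$. Under the natural strictness assumption $nF(t)-\frac{n-2m}{2}f(t)t<0$ for $t>0$ — which is precisely what holds in the intended application to $f(t)=t^{p}$ with $p>p_{c}$ — this already forces $u\equiv 0$. In the borderline case where equality may hold on a nontrivial set of $t$-values, one additionally uses the vanishing of the right-hand side: on the open portion of $\partial\Omega$ on which $(x-x_{0})\cdot\nu>0$, each product of gradient norms in the boundary sum must vanish, providing additional Cauchy data for $u$ beyond the prescribed Navier conditions; a unique-continuation argument for the polyharmonic operator then delivers $u\equiv 0$.

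The proof is essentially a bookkeeping exercise once Theorem \ref{Pohozaev} is in hand, and the only real obstacle is ensuring that the signs on the right-hand side of \eqref{P} are read correctly: that the vertical bars denote the Euclidean norm (so that each term is truly nonnegative), and that the factor $\{m/2\}$ correctly isolates the middle contribution occurring exclusively for odd $m$. Beyond that, the conclusion of Corollary \ref{cor0} follows immediately.
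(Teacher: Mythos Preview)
Your approach is on the right track but has a genuine gap in the borderline case, which is precisely the case $p=p_{c}$ in the intended application $f(t)=t^{p}$: there $nF(t)-\frac{n-2m}{2}f(t)t\equiv 0$, so your ``natural strictness assumption'' fails and the vanishing of the left-hand side tells you nothing. Your fallback --- reading off ``additional Cauchy data'' from the vanishing of the right-hand side and then invoking unique continuation --- does not go through as written. The boundary integrands are \emph{products} $|\nabla(-\Delta)^{m-k}u|\,|\nabla(-\Delta)^{k-1}u|$, so their vanishing at a boundary point only says that one of the two factors vanishes there, not that a specific normal derivative vanishes on an open piece of $\partial\Omega$; for $m$ even there is not even a square term to exploit. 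Extracting usable overdetermined data from this, and then running a unique-continuation argument for the nonlinear polyharmonic equation from a boundary portion, would require substantial additional work that you have not supplied.

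The paper closes this gap with a single clean idea that you are missing: if $u$ is a \emph{nontrivial} nonnegative solution, the strong maximum principle forces $(-\Delta)^{k}u>0$ in $\Omega$ for every $k=0,\ldots,m-1$, and then Hopf's boundary lemma gives $|\nabla(-\Delta)^{k}u|=-\partial_{\nu}(-\Delta)^{k}u>0$ everywhere on $\partial\Omega$. Hence each boundary integrand on the right-hand side of \eqref{P} is strictly positive, and since $(x-x_{0})\cdot\nu\geq 0$ is not identically zero on $\partial\Omega$, the right-hand side is \emph{strictly} positive. This contradicts the nonpositivity of the left-hand side immediately, with no case split and no unique continuation needed.
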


In particular, if we take $f(t):=t^{p}$, then Corollary \ref{cor0} implies immediately the following Liouville theorem for Navier problem \eqref{NHLE} in both critical and super-critical cases.
\begin{cor}\label{cor1}
Assume $1\leq m<\frac{n}{2}$, $p_{c}:=\frac{n+2m}{n-2m}\leq p<+\infty$ and $\Omega$ is star-shaped, then Navier problem \eqref{NHLE} has no nontrivial nonnegative solution.
\end{cor}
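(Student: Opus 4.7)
The plan is to apply Corollary \ref{cor0} directly with the choice $f(t):=t^{p}$, so that the only real work is to verify the sign condition $nF(t)-\frac{n-2m}{2}f(t)t\leq0$. For this $f$ one has $F(u)=\frac{u^{p+1}}{p+1}$ and $f(u)u=u^{p+1}$, hence
\begin{equation*}
nF(t)-\frac{n-2m}{2}f(t)t=\left[\frac{n}{p+1}-\frac{n-2m}{2}\right]t^{p+1},\qquad t\geq0.
\end{equation*}
The bracket is $\leq0$ iff $2n\leq(n-2m)(p+1)$, i.e.\ iff $p\geq\frac{n+2m}{n-2m}=p_{c}$, which is exactly the hypothesis of the corollary. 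Since $\Omega$ is assumed star-shaped, Corollary \ref{cor0} then yields that \eqref{NHLE} admits no nontrivial nonnegative solution in the range $p\geq p_{c}$.

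There is essentially no obstacle, since the analytic content has already been absorbed into Corollary \ref{cor0}; the present statement is merely the specialization of that corollary to the pure-power nonlinearity. The one conceptual point worth noting is the critical case $p=p_{c}$: here the bulk integrand on the left of \eqref{P} vanishes identically, and the nonexistence is forced entirely by the boundary terms on the right, which are nonnegative (by star-shapedness, $(x-x_{0})\cdot\nu\geq0$ on $\partial\Omega$ for a suitable $x_{0}$) and strictly positive on a subset of $\partial\Omega$ of positive surface measure. The resulting vanishing of the relevant normal derivatives on that subset, together with the Navier boundary conditions, forces $u\equiv0$; but again this is already encoded inside the proof of Corollary \ref{cor0}, so no additional argument is needed here.
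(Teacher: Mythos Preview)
Your proposal is correct and matches the paper's approach exactly: the paper simply states that taking $f(t)=t^{p}$ in Corollary~\ref{cor0} yields Corollary~\ref{cor1} immediately, and your verification of the sign condition $nF(t)-\frac{n-2m}{2}f(t)t=\left[\frac{n}{p+1}-\frac{n-2m}{2}\right]t^{p+1}\leq0$ for $p\geq p_{c}$ is precisely what makes that implication work. Your remarks on the critical case $p=p_{c}$ are accurate but, as you note, already absorbed into the proof of Corollary~\ref{cor0}.
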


Corollary \ref{cor1} indicates there are no positive solution to Navier problem \eqref{NHLE} in star-shaped domain $\Omega$ when $p=p_{c}$. As a consequence of Corollary \ref{cor1} and the existence results for $p<p_{c}$ in \cite{DPQ,DQ0}, we can infer that the $L^{\infty}$-norm of solutions to \eqref{NHLE} in star-shaped domain $\Omega$ must blow up as $p\rightarrow p_{c}-$, or else one could derive a positive solution in the critical case $p=p_{c}$ via compactness arguments. We have the following corollary.
\begin{cor}\label{cor2}
Assume $\Omega$ is star-shaped, then any sequence of solutions $\{u_{p_{k}}\}$ to Navier problems \eqref{NHLE} with $p=p_{k}\rightarrow p_{c}$ must blow up in $L^{\infty}$-norm, that is,
\begin{equation*}\label{blow-up}
  \|u_{p_{k}}\|_{L^{\infty}(\overline{\Omega})}\rightarrow+\infty \quad\,\, \text{as} \,\,\, k\rightarrow\infty.
\end{equation*}
\end{cor}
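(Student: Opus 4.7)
The plan is to argue by contradiction using a compactness argument. Suppose the conclusion fails; after extracting a subsequence we may assume $\|u_{p_k}\|_{L^\infty(\overline{\Omega})}\leq M$ for some constant $M$ independent of $k$. Since $m<\frac{n}{2}$ forces $p_c<+\infty$ and $p_k\to p_c$, the right-hand sides $u_{p_k}^{p_k}$ stay uniformly bounded in $L^\infty(\Omega)$.

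Next I would apply elliptic regularity. The Navier boundary conditions decouple \eqref{NHLE} into $m$ successive Poisson problems with zero Dirichlet data: setting $v_j^{(k)}:=(-\Delta)^{j-1}u_{p_k}$ for $1\leq j\leq m$, each $v_j^{(k)}$ vanishes on $\partial\Omega$ and satisfies $-\Delta v_j^{(k)}=v_{j+1}^{(k)}$ for $j<m$, while $-\Delta v_m^{(k)}=u_{p_k}^{p_k}$. Iterated $L^q$ and Schauder estimates on the $C^{2m-2}$ domain yield uniform bounds on $\{u_{p_k}\}$ in $C^{2m-2,\alpha}(\overline{\Omega})$ for some $\alpha\in(0,1)$, together with uniform interior bounds in $C^{2m,\alpha}_{\mathrm{loc}}(\Omega)$. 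Arzel\`a--Ascoli then produces a subsequence converging in $C^{2m-2}(\overline{\Omega})\cap C^{2m}_{\mathrm{loc}}(\Omega)$ to a limit $u_*\in C^{2m}(\Omega)\cap C^{2m-2}(\overline{\Omega})$ satisfying
\begin{equation*}
(-\Delta)^m u_*=u_*^{p_c}\ \text{in}\ \Omega,\qquad u_*=(-\Delta)u_*=\cdots=(-\Delta)^{m-1}u_*=0\ \text{on}\ \partial\Omega,
\end{equation*}
with $u_*\geq0$ on $\overline{\Omega}$.

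To reach a contradiction I need $u_*\not\equiv0$ so that Corollary \ref{cor1} applies. The lower bound \eqref{lower-bound} gives $\|u_{p_k}\|_{L^\infty(\overline{\Omega})}\geq\bigl(\sqrt{2n}/\mathrm{diam}\,\Omega\bigr)^{2m/(p_k-1)}$, and since $p_c<+\infty$ the right-hand side converges to the strictly positive constant $\bigl(\sqrt{2n}/\mathrm{diam}\,\Omega\bigr)^{2m/(p_c-1)}$. Uniform convergence on $\overline{\Omega}$ then forces $\|u_*\|_{L^\infty(\overline{\Omega})}>0$, so $u_*$ is a nontrivial nonnegative classical solution to \eqref{NHLE} at the critical exponent $p=p_c$ on the star-shaped domain $\Omega$, which is precluded by Corollary \ref{cor1}.

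The only genuinely delicate step is the regularity/compactness passage to the limit: one must check that after extraction, $(-\Delta)^j u_{p_k}\to(-\Delta)^j u_*$ uniformly on $\overline{\Omega}$ for every $0\leq j\leq m-1$, so that \emph{all} the Navier boundary conditions pass to the limit. The Navier decoupling into iterated Dirichlet Poisson problems makes this routine, and non-triviality of $u_*$ is handled cleanly by the diameter-independent lower bound \eqref{lower-bound}.
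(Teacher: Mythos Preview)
Your argument is correct and matches the paper's own approach: the paper states Corollary~\ref{cor2} without a separate proof, justifying it in the preceding paragraph by exactly the compactness--contradiction scheme you carry out (bounded $L^\infty$ $\Rightarrow$ a nontrivial limit at $p=p_c$ via elliptic regularity, contradicting Corollary~\ref{cor1}). Your use of the lower bound \eqref{lower-bound} to secure $u_*\not\equiv 0$ is a clean way to make the nontriviality step explicit, and the Navier decoupling into iterated Dirichlet problems is precisely how one obtains the needed $C^{2m-2}(\overline{\Omega})$ compactness on a $C^{2m-2}$ domain.
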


\subsection{The critical order cases $m=\frac{n}{2}$ with $n\geq2$ even.}

Next, we consider the critical order cases $m=\frac{n}{2}$ with $n\geq2$ even.

In contrast with the non-critical order cases, Ren and Wei \cite{RW} showed that the least-energy solutions of \eqref{NHLE} stay uniformly bounded as $p\rightarrow+\infty$. Subsequently, Kamburov and Sirakov \cite{KS} proved that positive solutions of \eqref{NHLE} with $m=1$ in a 2D smooth bounded domain $\Omega$ are uniformly bounded for all large exponents $p_{0}\leq p<+\infty$. For asymptotic description of positive solutions to \eqref{NHLE} in the case $m=1$ and $n=2$ as $p\rightarrow+\infty$, please refer to \cite{AG,DIP,DIP1}.

In this paper, by using the methods from Kamburov and Sirakov \cite{KS} of employing the Green's representation formula and results from Chen, Dai and Qin \cite{CDQ}, we will establish uniform a priori estimates for positive solutions to critical order Navier problems \eqref{NHLE} (with general $m=\frac{n}{2}$ and $n\geq4$ even) for all large exponents $p$ in strictly convex domain $\Omega$ with $C^{n-2}$ boundary $\partial\Omega$.

We have the following uniform a priori estimates for the critical order Navier problems \eqref{NHLE}.
\begin{thm}\label{Thm0}
Assume $n\geq4$ is even, $m=\frac{n}{2}$, $\Omega\subset\mathbb{R}^{n}$ is strictly convex and let $p_{0}>1$. There exists a constant $C$ depending only on $p_{0}$, $n$ and $\Omega$, such that for all $p_{0}\leq p<+\infty$, any solution $u_{p}\in C^{n}(\Omega)\cap C^{n-2}(\overline{\Omega})$ to critical order problem \eqref{NHLE} satisfies:
\begin{equation*}
  \|u_{p}\|_{L^{\infty}(\overline{\Omega})}\leq C.
\end{equation*}
\end{thm}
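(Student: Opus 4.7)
The plan is to argue by contradiction: suppose a sequence $p_k \geq p_0$ and solutions $u_k := u_{p_k}$ of \eqref{NHLE} exist with $M_k := \|u_k\|_{L^\infty(\overline\Omega)} \to +\infty$. After extracting a subsequence one is in one of two regimes, $p_k \to p_\infty \in [p_0, +\infty)$ or $p_k \to +\infty$, which I would treat separately.

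For $p_k \to p_\infty < \infty$, I would perform the standard rescaling at a maximum point $x_k$: set $v_k(y) := M_k^{-1} u_k(x_k + \lambda_k y)$ with $\lambda_k := M_k^{-(p_k-1)/(2m)} \to 0$, so that $(-\Delta)^m v_k = v_k^{p_k}$, $\|v_k\|_{L^\infty} = 1 = v_k(0)$, on the rescaled domain $\Omega_k := \lambda_k^{-1}(\Omega - x_k)$ that exhausts either all of $\mathbb{R}^n$ (interior case) or a half-space (boundary case). Interior Schauder/$L^p$ estimates together with the boundary regularity for higher order Navier problems from \cite{CDQ} would give enough compactness to extract a nontrivial bounded entire solution $v$ of $(-\Delta)^{n/2} v = v^{p_\infty}$ on $\mathbb{R}^n$ (the half-space case being folded back to $\mathbb{R}^n$ via odd reflection through $\partial\mathbb{R}^n_+$ using the Navier boundary values, or ruled out directly). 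This contradicts the Liouville theorem for critical order Lane--Emden equations in $\mathbb{R}^n$ from \cite{CDQ,DQ2}.

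The main case is $p_k \to +\infty$, for which I would adapt the Kamburov--Sirakov scheme \cite{KS}. Start from the Green's representation $u_k(x) = \int_\Omega G_m(x, y)\, u_k^{p_k}(y)\, dy$, where $G_m$ is the Green's function of the Navier problem for $(-\Delta)^m$; in the critical order $m = n/2$, $G_m$ carries only a logarithmic singularity $G_m(x,y) \sim -c_n \log|x-y|$, so $G_m \in L^q(\Omega \times \Omega)$ for every $q < \infty$. Combined with the Pohozaev identity of Theorem \ref{Pohozaev} (whose left-hand side simplifies to $n(p+1)^{-1} \int_\Omega u_k^{p_k+1}\,dx$ since $n - 2m = 0$), strict convexity (which yields $(x-x_0)\cdot \nu \geq c > 0$ on $\partial\Omega$ for an appropriate $x_0$), and $L^p$ boundary regularity for the higher order Navier problem, I expect a uniform bound on $\int_\Omega u_k^{p_k}\,dx$. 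A Brezis--Merle type dichotomy driven by the logarithmic kernel $G_m$ then forces either (a) $u_k$ to be uniformly bounded, contradicting the assumption, or (b) $u_k^{p_k}\,dx \rightharpoonup \sum_j \alpha_j \delta_{z_j}$ weakly in measures for a finite set $\{z_j\} \subset \overline\Omega$, with $u_k$ locally bounded on $\overline\Omega \setminus \{z_j\}$.

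To close case (b) I would rule out both interior and boundary concentration. Boundary concentration $z_j \in \partial\Omega$ is excluded by recentering the Pohozaev identity at $z_j$ and using strict convexity, so that the sign-definite boundary integrals cannot balance the concentrating interior mass. At an interior concentration point $z_0$, take local maxima $z_k \to z_0$ with $M_k := u_k(z_k)$, and use the sharp scaling $\varepsilon_k := (p_k M_k^{p_k-1})^{-1/(2m)}$ and $v_k(y) := p_k(u_k(z_k + \varepsilon_k y)/M_k - 1)$, for which $v_k(0) = 0$, $v_k \leq 0$, and $(-\Delta)^m v_k = (1 + v_k/p_k)^{p_k}$. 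Passing to the limit yields a nontrivial entire solution of the critical order Liouville equation $(-\Delta)^{n/2} v = e^v$ on $\mathbb{R}^n$ with $v(0) = 0$ and finite total mass (inherited from the $L^1$-concentration), and a Pohozaev-type identity on large balls in $\mathbb{R}^n$ (again using $n - 2m = 0$) then produces a quantitative relation linking $M_k$, $p_k$ and the bubble mass which, matched against the Pohozaev bound on $\int u_k^{p_k+1}$, becomes inconsistent. The principal obstacle will be this last step: carrying out the Brezis--Merle concentration analysis with the higher order kernel $G_m$ and making the boundary exclusion precise is where strict convexity (rather than mere star-shapedness) is decisive, both for signing the Pohozaev boundary integrals and for preventing concentration at $\partial\Omega$.
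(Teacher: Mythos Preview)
Your strategy is fundamentally different from the paper's and contains real gaps. The paper never argues by contradiction or passes to a blow-up limit; it gives a direct quantitative bound $\ln M\leq C$. The key steps are: (i) the eigenfunction trick $\int_\Omega u^{p}\phi=\lambda_{1}\int_\Omega u\phi\leq \lambda_{1}\big(\int_\Omega u^{p}\phi\big)^{1/p}\big(\int_\Omega\phi\big)^{1/p'}$ yields $\int_\Omega u^{p}\phi\leq C$, and moving planes (from \cite{CDQ}; this is where strict convexity enters) forces the maximum of each $(-\Delta)^{k}u$ to lie in $\overline{\Omega^{\delta}}$, whence $\int_\Omega u^{p}\leq C$; (ii) Green's representation with explicit control of the regular part $h$ gives $\frac{1}{M}\int_\Omega \ln(1/|x-y|)\,u^{p}(y)\,dy\leq C$ for $x\in\overline{\Omega^{\delta}}$; (iii) a Harnack estimate applied to $M-u$, combined with the bound $\max(-\Delta u)\leq C\,p^{-(1-2/n)}M^{\,2p/n+(1-2/n)}$ (itself obtained from the Green formula split over three dyadic shells), shows $M-u(x)\leq CM/p$ on the ball $|x|\leq \delta\,p^{-1/n}M^{-(p-1)/n}$; (iv) inserting this lower bound on $u$ into (ii) at the maximum point produces $C\geq c\ln M$.

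Your most concrete error is invoking the Pohozaev identity to bound $\int_\Omega u^{p}$. With $n-2m=0$ the identity reads $\frac{n}{p+1}\int_\Omega u^{p+1}=\text{(positive boundary terms)}$, and those boundary integrals of $|\nabla(-\Delta)^{m-k}u|\,|\nabla(-\Delta)^{k-1}u|$ are not a priori controlled; the inequality points the wrong way for an upper bound. (Kamburov--Sirakov use the eigenfunction trick, not Pohozaev, for exactly this reason.) Second, your closure of case~(b) is only a hope: you arrive at an entire solution of $(-\Delta)^{n/2}v=e^{v}$ with finite mass but never specify what quantity becomes ``inconsistent''; the paper's mechanism, by contrast, extracts the factor $\ln M$ explicitly from the logarithmic Green kernel. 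Finally, your half-space blow-up and boundary-concentration exclusion require Liouville/reflection ingredients you do not supply, whereas the paper eliminates all boundary analysis in one stroke via moving planes.
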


\begin{rem}\label{rem0}
Theorem \ref{Thm0} extends the uniform a priori estimates derived in \cite{KS,RW} for second order case $m=1$ and $n=2$ to general critical order cases $m=\frac{n}{2}$ and $n\geq4$ is even.
\end{rem}

\begin{rem}\label{rem1}
Being essentially different from the second order case $m=1$ and $n=2$, the information and estimates on $-\Delta u$ play a crucial role in the proof of Theorem \ref{Thm0}, please see Lemma \ref{lem0} and \ref{lem2}. More precisely, we proved in Lemma \ref{lem2} the following crucial property:
\begin{equation*}
  \max_{\overline{\Omega}}(-\Delta)^{k}u\sim\frac{\left[\max_{\overline{\Omega}}u\right]^{\frac{2k}{n}p+(1-\frac{2k}{n})}}{p^{1-\frac{2k}{n}}}
\end{equation*}
for any $k=1,\cdots,\frac{n}{2}-1$. In particular, from the proof of Lemma \ref{lem2} (see \eqref{3-23}), one has the following pointwise estimates at the maximum $x_{0}$ of $u$ in $\overline{\Omega}$:
\begin{equation*}
  C''_{k}\frac{\left(u(x_{0})\right)^{\frac{2k}{n}p+(1-\frac{2k}{n})}}{p^{1-\frac{2k}{n}}}\leq(-\Delta)^{k}u(x_{0})\leq C'_{k}\frac{\left(u(x_{0})\right)^{\frac{2k}{n}p+(1-\frac{2k}{n})}}{p^{1-\frac{2k}{n}}}
\end{equation*}
for any $k=1,\cdots,\frac{n}{2}-1$. For related pointwise inequality in $\mathbb{R}^{n}$, we refer to Fazly, Wei and Xu \cite{FWX}.
\end{rem}

The rest of our paper is organized as follows. In Section 2, we carry out the proof for Theorem \ref{Pohozaev} and Corollary \ref{cor0}. Section 3 is devoted to proving our Theorem \ref{Thm0}.

\section{Proof of Theorem \ref{Pohozaev} and Corollary \ref{cor0}}

In this section, we will first prove Theorem \ref{Pohozaev}.
\begin{proof}[Proof of Theorem \ref{Pohozaev}]
Without loss of generality, we may assume that $0\in\Omega$ and $x_{0}=0$. Since $u$ is a nonnegative solution to the generalized Navier problem \eqref{gNHLE}, by Navier boundary condition and maximum principles, we get
\begin{equation}\label{2-0}
  (-\Delta)^{k}u(x)\geq0, \qquad \,\, \forall \, k=0,1,\cdots,m-1, \,\, x\in\Omega,
\end{equation}
moreover,
\begin{equation}\label{2-1}
  \nabla(-\Delta)^{k}u(x)=-\left|\nabla(-\Delta)^{k}u(x)\right|\nu, \qquad \,\, \forall \, k=0,1,\cdots,m-1, \,\, x\in\partial\Omega,
\end{equation}
where $\nu$ denotes the unit outward normal vector at $x\in\partial\Omega$.

Multiplying both sides of the equation \eqref{gNHLE} by $x\cdot\nabla u$ and integrating on $\Omega$, one gets
\begin{equation}\label{2-2}
  \int_{\Omega}(-\Delta)^{m}u(x)(x\cdot\nabla u)dx=\int_{\Omega}f(u(x))(x\cdot\nabla u)dx.
\end{equation}
Integrating by parts yields
\begin{equation}\label{2-3}
  RHS=\int_{\Omega}f(u(x))(x\cdot\nabla u)dx=\int_{\Omega}x\cdot\nabla F(u(x))dx=-n\int_{\Omega}F(u(x))dx,
\end{equation}
where $F(u):=\int_{0}^{u}f(t)dt$. For the left-hand side of \eqref{2-2}, by calculations and integrating by parts, we have
\begin{eqnarray}\label{2-4}
  && LHS=-\int_{\partial\Omega}\left(\nabla(-\Delta)^{m-1}u\cdot\nu\right)(x\cdot\nabla u)d\sigma_{x}+\int_{\Omega}\nabla(-\Delta)^{m-1}u\cdot\nabla udx \\
 \nonumber &&\qquad\quad\quad+\int_{\Omega}\nabla(-\Delta)^{m-1}u\cdot\nabla^{2}u\cdot xdx \\
 \nonumber &=& -\int_{\partial\Omega}\left|\nabla(-\Delta)^{m-1}u\right|\,|\nabla u|(x\cdot\nu)d\sigma_{x}+\int_{\Omega}f(u)udx-\int_{\Omega}(-\Delta)^{m-1}u\left[\nabla\Delta u\cdot x+\Delta u\right]dx \\
 \nonumber &=& -\int_{\partial\Omega}\left|\nabla(-\Delta)^{m-1}u\right|\,|\nabla u|(x\cdot\nu)d\sigma_{x}+2\int_{\Omega}f(u)udx+\int_{\Omega}(-\Delta)^{m-1}u\left(\nabla(-\Delta)u\cdot x\right)dx \\
 \nonumber &=& -\int_{\partial\Omega}\left|\nabla(-\Delta)^{m-1}u\right|\,|\nabla u|(x\cdot\nu)d\sigma_{x}-\int_{\partial\Omega}\left|\nabla(-\Delta)^{m-2}u\right|\,\left|\nabla(-\Delta)u\right|(x\cdot\nu)d\sigma_{x} \\
 \nonumber &&+4\int_{\Omega}f(u)udx+\int_{\Omega}(-\Delta)^{m-2}u\left(\nabla(-\Delta)^{2}u\cdot x\right)dx=\cdots\cdots.
\end{eqnarray}
Continuing this way, we get the following two different cases:\\
\emph{i)} if $m$ is even, then
\begin{align}\label{2-5}
  LHS=&-\sum_{k=1}^{\frac{m}{2}}\int_{\partial\Omega}\left|\nabla(-\Delta)^{m-k}u\right|\,\left|\nabla(-\Delta)^{k-1}u\right|(x\cdot\nu)d\sigma_{x}+m\int_{\Omega}f(u)udx \\
 \nonumber & +\int_{\Omega}(-\Delta)^{\frac{m}{2}}u\left(\nabla(-\Delta)^{\frac{m}{2}}u\cdot x\right)dx \\
 \nonumber =&-\sum_{k=1}^{\frac{m}{2}}\int_{\partial\Omega}\left|\nabla(-\Delta)^{m-k}u\right|\,\left|\nabla(-\Delta)^{k-1}u\right|(x\cdot\nu)d\sigma_{x}
  +\left(m-\frac{n}{2}\right)\int_{\Omega}f(u)udx;
\end{align}
\emph{ii)} if $m$ is odd, then
\begin{align}\label{2-6}
  LHS=&-\sum_{k=1}^{\frac{m-1}{2}}\int_{\partial\Omega}\left|\nabla(-\Delta)^{m-k}u\right|\,\left|\nabla(-\Delta)^{k-1}u\right|(x\cdot\nu)d\sigma_{x}+(m-1)\int_{\Omega}f(u)udx \\
 \nonumber & +\int_{\Omega}(-\Delta)^{\frac{m+1}{2}}u\left(\nabla(-\Delta)^{\frac{m-1}{2}}u\cdot x\right)dx \\
 \nonumber =&-\sum_{k=1}^{\frac{m-1}{2}}\int_{\partial\Omega}\left|\nabla(-\Delta)^{m-k}u\right|\,\left|\nabla(-\Delta)^{k-1}u\right|(x\cdot\nu)d\sigma_{x}+m\int_{\Omega}f(u)udx\\
 \nonumber &-\int_{\partial\Omega}\left|\nabla(-\Delta)^{\frac{m-1}{2}}u\right|^{2}(x\cdot\nu)d\sigma_{x}+\int_{\Omega}\nabla(-\Delta)^{\frac{m-1}{2}}u\cdot\nabla^{2}(-\Delta)^{\frac{m-1}{2}}u\cdot xdx\\
 \nonumber =&-\sum_{k=1}^{\frac{m-1}{2}}\int_{\partial\Omega}\left|\nabla(-\Delta)^{m-k}u\right|\,\left|\nabla(-\Delta)^{k-1}u\right|(x\cdot\nu)d\sigma_{x}
 +\left(m-\frac{n}{2}\right)\int_{\Omega}f(u)udx\\
 \nonumber &-\frac{1}{2}\int_{\partial\Omega}\left|\nabla(-\Delta)^{\frac{m-1}{2}}u\right|^{2}(x\cdot\nu)d\sigma_{x}.
\end{align}

As a consequence of \eqref{2-2}, \eqref{2-3}, \eqref{2-4}, \eqref{2-5} and \eqref{2-6}, we arrive at the Pohozaev type identity \eqref{P} immediately. This concludes our proof of Theorem \ref{Pohozaev}.
\end{proof}

\begin{proof}[Proof of Corollary \ref{cor0}]
Assume the function $f$ satisfies $nF(t)-\frac{n-2m}{2}f(t)t\leq0$. Without loss of generality, we may also assume $0\in\Omega$ and $\Omega$ is star-shaped with respect to $0$. We argue by contradiction, assuming that $u$ is a nontrivial nonnegative solution to Navier problem \eqref{gNHLE}. By \eqref{2-0} and strong maximum principle, we have
\begin{equation}\label{2-7}
  (-\Delta)^{k}u>0, \qquad \,\, \forall \, k=0,1,\cdots,m-1, \,\, x\in\Omega.
\end{equation}
Then, from \eqref{2-1}, Navier boundary condition and Hopf boundary Lemma, we deduce that
\begin{equation}\label{2-8}
  \left|\nabla(-\Delta)^{k}u(x)\right|=-\frac{\partial}{\partial\nu}(-\Delta)^{k}u(x)>0, \qquad \,\, \forall \, k=0,1,\cdots,m-1, \,\, x\in\partial\Omega.
\end{equation}

Since $\Omega$ is star-shaped w.r.t. $0$ and $f$ satisfies $nF(t)-\frac{n-2m}{2}f(t)t\leq 0$, then we can deduce from Theorem \ref{Pohozaev} and \eqref{2-8} that
\begin{multline}\label{2-9}
   0\geq \int_{\Omega}\left[nF(u)-\frac{n-2m}{2}f(u)u\right]dx\\
   =\sum_{k=1}^{[\frac{m}{2}]}\int_{\partial\Omega}\left|\nabla(-\Delta)^{m-k}u\right|\,\left|\nabla(-\Delta)^{k-1}u\right|
  \left(x\cdot\nu\right)d\sigma_{x}
 +\left\{\frac{m}{2}\right\}\int_{\partial\Omega}\left|\nabla(-\Delta)^{\frac{m-1}{2}}u\right|^{2}\left(x\cdot\nu\right)d\sigma_{x}>0,
\end{multline}
where we have used that $x\cdot\nu\geq 0$ for $x\in \partial\Omega$, and that this quantity cannot be identically zero on $\partial\Omega$.
It is clear that \eqref{2-9} is absurd. This finishes our proof of Corollary \ref{cor0}.
\end{proof}

\section{Proof of Theorem \ref{Thm0}}

In this section, we will prove Theorem \ref{Thm0} by using the methods from Kamburov and Sirakov \cite{KS} of employing the Green's representation formula and results from Chen, Dai and Qin \cite{CDQ}.

In the following, we will use $C$ to denote a general positive constant that may depend on $n$, $p_{0}$ and $\Omega$, and whose value may differ from line to line. In all the proof, we assume $p\geq p_0$.

For $n\geq4$ even and $m=\frac{n}{2}$, assume $u=u_{p}$ is a positive solution to critical order Navier problem \eqref{NHLE}, by maximum principle, we have $(-\Delta)^{k}u_{p}>0$ in $\Omega$ for any $k=0,1,\cdots,\frac{n}{2}-1$. Furthermore, using results from Chen, Dai and Qin \cite{CDQ}, we can prove the following Lemma.
\begin{lem}\label{lem0}
Assume $n\geq4$ is even, $m=\frac{n}{2}$, $\Omega$ is strictly convex and let $p_{0}>1$. There exist positive constants $\delta$ depending only on $\Omega$, and $C$ depending only on $n$, $p_{0}$ and $\Omega$, such that \\
(i) The maximum of the solution $u=u_{p}$ in $\overline{\Omega}$ can (only) be attained in $\overline{\Omega^{\delta}}:=\{x\in\Omega\,|\,dist(x,\partial\Omega)\geq\delta\}$, moreover, the maximum of $(-\Delta)^{k}u_{p}$ ($k=1,\cdots,\frac{n}{2}-1$) in $\overline{\Omega}$ can (only) be attained in $\overline{\Omega^{\delta}}$; \\
(ii) For every $p\geq p_{0}$, the solution $u=u_{p}$ satisfies the uniform bound:
\begin{equation*}
  \int_{\Omega}u^{p}(x)dx\leq C.
\end{equation*}
\end{lem}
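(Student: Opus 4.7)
The plan is twofold. For part~(i), I would treat \eqref{NHLE} as a cooperative second-order system and run the moving plane method in the form developed by Chen--Dai--Qin \cite{CDQ}. Setting $v_k:=(-\Delta)^k u_p$ for $k=0,\dots,m-1$, the Navier conditions give $v_k=0$ on $\partial\Omega$, the maximum principle gives $v_k>0$ in $\Omega$, and these functions satisfy $-\Delta v_{k-1}=v_k$ ($k\ge 1$) and $-\Delta v_{m-1}=u^p$. For any direction $e\in S^{n-1}$ one slides the hyperplane $T_\lambda=\{x\cdot e=\lambda\}$ inward from a support hyperplane of $\Omega$; applying the maximum principle component by component yields $v_k(x)\le v_k(x^\lambda)$ in the far cap $\Sigma_\lambda$, where $x^\lambda$ denotes the reflection across $T_\lambda$, as long as $\lambda\ge\lambda^*(e)$, the first value at which the reflected cap fails to be contained in $\Omega$. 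Strict convexity of $\Omega$ rules out any nontrivial symmetry, giving strict monotonicity of each $v_k$ in the direction $-e$ throughout $\Sigma_{\lambda^*(e)}$. Since $\lambda^*(e)$ depends continuously on $e$ (by the $C^{2m-2}$, hence $C^2$, regularity of $\partial\Omega$) and $S^{n-1}$ is compact, the union of these caps contains a collar of uniform width $\delta=\delta(\Omega)>0$ in which no maximum of $u_p$ or of any $(-\Delta)^k u_p$ can lie, which is exactly (i).

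For part~(ii), I would test \eqref{NHLE} against the first Dirichlet eigenfunction $\phi_1>0$ of $-\Delta$ on $\Omega$. Since $-\Delta\phi_1=\lambda_1\phi_1$, iteration gives $(-\Delta)^k\phi_1=\lambda_1^k\phi_1$, which vanishes on $\partial\Omega$ for every $k\le m-1$; thus $\phi_1$ satisfies exactly the Navier boundary conditions imposed on $u$. Integrating by parts $2m$ times, every boundary integrand contains a factor from $\{u,-\Delta u,\dots,(-\Delta)^{m-1}u,\phi_1,-\Delta\phi_1,\dots,(-\Delta)^{m-1}\phi_1\}$ and thus vanishes on $\partial\Omega$, leaving
\[
\int_\Omega u^{p}\phi_1\,dx=\int_\Omega u\,(-\Delta)^m\phi_1\,dx=\lambda_1^m\int_\Omega u\,\phi_1\,dx.
\]
H\"older's inequality gives $\int_\Omega u\,\phi_1\,dx\le\bigl(\int_\Omega u^{p}\phi_1\,dx\bigr)^{1/p}\bigl(\int_\Omega\phi_1\,dx\bigr)^{(p-1)/p}$, from which the weighted bound $\int_\Omega u^{p}\phi_1\,dx\le\lambda_1^{mp/(p-1)}\,\|\phi_1\|_{L^1}\le C$, uniform in $p\ge p_0>1$, follows immediately.

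To promote this weighted bound to the stated $\int_\Omega u^{p}\,dx\le C$, I invoke part~(i). Hopf's lemma gives $\phi_1\ge c_\delta>0$ on $\overline{\Omega^\delta}$, so $\int_{\Omega^\delta}u^{p}\,dx\le c_\delta^{-1}\int_\Omega u^{p}\phi_1\,dx\le C$. The collar $\Omega\setminus\Omega^\delta$ is covered by finitely many small boundary pieces, and on each piece the reflection map from part~(i) sends the piece into an interior region (after a harmless enlargement of $\delta$), yields the pointwise bound $u(x)\le u(x^\lambda)$, and has a Jacobian bounded by $C^2$-regularity of $\partial\Omega$. A change of variables gives $\int_{\Omega\setminus\Omega^\delta}u^{p}\,dx\le C\int_{\Omega^\delta}u^{p}\,dx\le C$, proving (ii). The main obstacle is really part~(i): executing the moving plane method for the Navier cooperative system and, above all, producing a depth $\delta>0$ independent of $p$; this rests on the Hopf lemma for cooperative systems together with the strict convexity of $\Omega$.
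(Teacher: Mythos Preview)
Your proposal is correct and follows essentially the same route as the paper: moving planes for the cooperative system $v_k=(-\Delta)^k u$ (citing \cite{CDQ}) to get a $p$-independent collar of monotonicity, then testing against a Navier eigenfunction and upgrading the weighted $L^p$ bound to an unweighted one via the monotonicity in the collar. Two cosmetic differences worth noting: the paper obtains the uniform $\delta$ by covering $\partial\Omega$ with finitely many balls rather than by compactness of $S^{n-1}$, and it introduces the first Navier eigenfunction of $(-\Delta)^{n/2}$ directly, whereas your observation that the first Dirichlet eigenfunction $\phi_1$ of $-\Delta$ already satisfies $(-\Delta)^m\phi_1=\lambda_1^m\phi_1$ with the Navier boundary conditions is a pleasant shortcut to the same object; also, since the moving-plane reflection is an affine isometry, the Jacobian is identically $1$ and no appeal to $C^2$-regularity is needed there.
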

\begin{proof}
\noindent\emph{Proof of (i).} By using the method of moving planes in local way as in \cite{CDQ} (see pp. 16-21 in \cite{CDQ}), we can get that (see pp. 19 in \cite{CDQ}), for any $x^{0}\in\partial\Omega$, there exists a $\delta_{0}>0$ depending only on $x^{0}$ and $\Omega$ such that, $u(x)$ is monotone increasing along the internal normal direction in the region
\begin{equation}\label{3-31}
  \overline{\Sigma_{\delta_{0}}}:=\left\{x\in\overline{\Omega}\,|\,0\leq(x-x^{0})\cdot\nu^{0}\leq\delta_{0}\right\}.
\end{equation}
Since $\partial\Omega$ is $C^{n-2}$, there exists a small enough $0<r_{0}<\frac{\delta_{0}}{8}$ depending only on $x^{0}$ and $\Omega$ such that, for any $x\in B_{r_{0}}(x^{0})\cap\partial\Omega$, $u(x)$ is monotone increasing along the internal normal direction at $x$ in the region
\begin{equation}\label{3-32}
  \overline{\Sigma_{x}}:=\left\{z\in\overline{\Omega}\,\Big|\,0\leq(z-x)\cdot\nu_{x}\leq\frac{3}{4}\delta_{0}\right\}.
\end{equation}
where $\nu_{x}$ denotes the unit internal normal vector at the point $x$ ($\nu_{x^{0}}:=\nu^{0}$). Since $x^{0}\in\partial\Omega$ is arbitrary and $\partial\Omega$ is compact, we can cover $\partial\Omega$ by finite balls $\{B_{r_{k}}(x^{k})\}_{k=0}^{K}$ with centers $\{x^{k}\}_{k=0}^{K}\subset\partial\Omega$ ($K$ depends only on $\Omega$). Let $\delta:=\frac{3}{4}\min\{\delta_{1},\cdots,\delta_{K}\}$ depending only on $\Omega$, then it is clear that for any $x\in\partial\Omega$,
\begin{equation}\label{3-34'}
  u(x+s\nu_{x}) \quad \text{is monotone increasing with respect to} \,\,\, s\in\left[0,\delta\right],
\end{equation}
and hence property (i) for $u=u_{p}$ follows from \eqref{3-34'} immediately.

Moreover, it is also clear from the procedure of moving planes in \cite{CDQ} that (see pp. 16-21 in \cite{CDQ}), $(-\Delta)^{k}u_{p}$ ($k=1,\cdots,\frac{n}{2}-1$) are also monotone increasing along the internal normal directions in the boundary layer $\overline{\Omega\setminus\overline{\Omega^{\delta}}}$, and hence the maximum of $(-\Delta)^{k}u_{p}$ ($k=1,\cdots,\frac{n}{2}-1$) in $\overline{\Omega}$ can (only) be attained in $\overline{\Omega^{\delta}}$.

\medskip

\noindent\emph{Proof of (ii).} Let $\lambda_{1}$ be the first eigenvalue for $(-\Delta)^{\frac{n}{2}}$ in $\Omega$ with Navier boundary condition, and $0<\phi\in C^n(\Omega)\cap C^{n-2}(\overline{\Omega})$ be the corresponding eigenfunction (without loss of generality, we may assume $\|\phi\|_{L^{\infty}(\overline{\Omega})}=1$), i.e.,
\begin{equation}\label{3-50}\\\begin{cases}
(-\Delta)^{\frac{n}{2}}\phi(x)=\lambda_{1}\phi(x) \,\,\,\,\,\,\,\,\,\, \text{in} \,\,\, \Omega, \\
\phi(x)=-\Delta \phi(x)=\cdots=(-\Delta)^{\frac{n}{2}-1}\phi(x)=0 \,\,\,\,\,\,\,\, \text{on} \,\,\, \partial\Omega.
\end{cases}\end{equation}
Then, since
\begin{equation*}
 \int_{\Omega}u^{p}\phi=\int (-\Delta)^{\frac n2} u\phi=\lambda_1\int_{\Omega} u\phi\leq \lambda_1 \left( \int_{\Omega} u^p\phi \right)^{\frac{1}{p}}\left(\int_{\Omega}\phi\right)^{\frac{1}{p'}},
\end{equation*}
we obtain, as in Lemma 3.2 in page 20 of \cite{CDQ},
\begin{equation}\label{3-52}
  \int_{\Omega}u^{p}(x)\phi(x)dx\leq\lambda^{p'}_{1}\int_{\Omega}\phi(x)dx\leq\lambda^{p'}_{1}|\Omega|.
\end{equation}
Thus, for any $p\geq p_{0}$, we have the following uniform bound:
\begin{equation}\label{3-53}
  \int_{\Omega}u^{p}(x)\phi(x)dx\leq C(n,p_{0},\Omega).
\end{equation}

Let $x\in \partial \Omega$. Since $\Omega$ is at least $C^1$, there exists a small $\eps_x>0$ and a neighborhood $V_x$ of $x$ in $\partial\Omega$ such that
$$W_x:=\left\{y+\delta \nu_x,\; y\in V_x,\; 0<\delta<\eps_x\right\}\subset \Omega.$$
Let
$$W_x':=\left\{y+\delta \nu_x,\; y\in V_x,\; \frac{\eps_x}{2}<\delta<\eps_x\right\}\subset \Omega,$$
Since $x\in\partial\Omega$ is arbitrary and $\partial\Omega$ is compact, we can find a finite subset $\{x^{k}\}_{k=0}^{K}$ of $\partial\Omega$ ($K$ depends only on $\Omega$), such that $\partial \Omega \subset \bigcup_{k=0}^K V_{x^k}.$
Considering the boundary layer $\overline{\Omega}_{\bar{\delta}}:=\{x\in\overline{\Omega}\,|\,dist(x,\partial\Omega)\leq\bar{\delta}\}$, we see that if $\bar{\delta}>0$ is small enough,
\begin{equation}
\label{3-T0}
\overline{\Omega}_{\bar{\delta}}\subset \bigcup_{k=0}^K W_{x^k},\quad \bigcup_{k=0}^K W_{x^k}'\subset \Omega\setminus \overline{\Omega}_{\bar{\delta}}.
\end{equation}
From the procedure of moving planes in \cite{CDQ}, for all $y\in V_{x^k}$, $\sigma\mapsto u(y+\sigma\nu_{x^k})$ is monotone increasing on $(0,\eps_{x^k})$, and thus, using the second inclusion in \eqref{3-T0}
\begin{equation}
 \label{3-T1}
 \int_{W_{x^k}} u^p(x)\,dx \leq 2\int_{W_{x^k}'}u^p(x)\,dx\leq 2\int_{\Omega\setminus \overline{\Omega}_{\bar{\delta}}} u^p(x)\,dx.
\end{equation}
As a consequence, using the first inclusion in \eqref{3-T0},
\begin{equation}
 \label{3-T2}
 \int_{\Omega} u^{p}(x)\,dx\leq \sum_{k=0}^K \int_{W_{x^k}}u^{p}(x)\,dx+\int_{\Omega\setminus \overline{\Omega}_{\bar{\delta}}}u^p(x)\,dx\leq (2K+3)\int_{\Omega\setminus \overline{\Omega}_{\bar{\delta}}} u^p(x)\,dx.
\end{equation}
Combining with the uniform bound \eqref{3-53}, we arrive at
\begin{equation}\label{3-54}
  \int_{\Omega}u^{p}(x)dx\leq
 (2K+3)\max_{x\in \Omega\setminus\overline{\Omega}_{\bar{\delta}}} {\frac{1}{\varphi(x)}}\int_{\Omega\setminus \overline{\Omega}_{\bar{\delta}}}\varphi u^p(x)\,dx\leq C(n,p_{0},\Omega),
\end{equation}
which proves property (ii). This completes our proof of Lemma \ref{lem0}.
\end{proof}

From now on, we will denote the solution $u_{p}$ by $u$ for the sake of simplicity.

Let $M:=\max_{\overline{\Omega}}u=\|u\|_{L^{\infty}(\overline{\Omega})}$, we aim to prove that, there exists a constant $C>0$ depending only on $n$, $p_{0}$ and $\Omega$, such that $M\leq C$ for any $p\geq p_{0}$. We may assume that $M>\max\left\{2^{n},2^{\frac{2n}{p_{0}-1}}\right\}$ hereafter, or else we have done.

We first rescale $u$, so that $\Omega\subseteq B_{\frac{1}{4}}(0)$. Indeed, let $R:=R(\Omega)>1$ be the smallest radius such that $\Omega\subseteq B_{\frac{R}{4}}(0)$. Then $u_{R}(x):=R^{\frac{n}{p-1}}u(Rx)$ is a nonnegative solution of Navier problem \eqref{NHLE} in $R^{-1}\Omega\subseteq B_{\frac{1}{4}}(0)$ and we only need to consider $u_{R}$ instead.
By Lemma \ref{lem0}, the maximum $M$ can (only) be attained at some point $x_{0}\in\overline{\Omega^{\delta}}:=\{x\in\Omega\,|\,dist(x,\partial\Omega)\geq\delta\}$. Without loss of generality, translating $\Omega$ if necessary, we may assume $0\in\overline{\Omega^{\delta}}$ and $x_{0}=0$, that is, $u(0)=M$. Note that after this translation, we have
$$\Omega \subseteq B_{\frac 12}(0).$$

For arbitrarily given $x\in\overline{\Omega^{\delta}}$, let $G(x,y)$ denote the Green's function for $(-\Delta)^{\frac{n}{2}}$ with pole at $x$. Then, we have
\begin{equation}\label{3-0}
  G(x,y)=C_{n}\ln\left(\frac{1}{|x-y|}\right)-h(x,y), \quad \forall \,\, y\in\overline{\Omega},
\end{equation}
where the $\frac{n}{2}$-harmonic function $h$ satisfies
\begin{equation}\label{3-8}\\\begin{cases}
(-\Delta)^{\frac{n}{2}}h(x,y)=0, \,\,\,\,\,\,\,\,\,\,\,\,\,\,\, y\in\Omega, \\
(-\Delta)^{k}h(x,y)=(-\Delta)^{k}\left(C_{n}\ln\frac{1}{|x-y|}\right),\,\,\,\,\, k=0,1,\cdots,\frac{n}{2}-1, \,\,\,\,\, y\in\partial\Omega.
\end{cases}\end{equation}
Since $\delta\leq|x-y|\leq1$ for all $y\in\partial\Omega$, we have
\begin{equation}\label{3-1}
  C\leq(-\Delta)^{\frac{n}{2}-1}h(x,y)=(-\Delta)^{\frac{n}{2}-1}\left(C_{n}\ln\frac{1}{|x-y|}\right)=\frac{C}{|x-y|^{n-2}}\leq\frac{C}{\delta^{n-2}}
\end{equation}
for any $y\in\partial\Omega$, and hence, the maximum principle implies
\begin{equation}\label{3-2}
  C\leq(-\Delta)^{\frac{n}{2}-1}h(x,y)\leq\frac{C}{\delta^{n-2}}, \quad \forall \,\, y\in\Omega.
\end{equation}
On the boundary $\partial\Omega$, we also have
\begin{equation}\label{3-3}
  C\leq(-\Delta)^{\frac{n}{2}-2}h(x,y)=(-\Delta)^{\frac{n}{2}-2}\left(C_{n}\ln\frac{1}{|x-y|}\right)=\frac{C}{|x-y|^{n-4}}\leq\frac{C}{\delta^{n-4}}
\end{equation}
for all $y\in\partial\Omega$. It follows from \eqref{3-2}, \eqref{3-3} and the maximum principles that
\begin{equation}\label{3-4}
  C\leq(-\Delta)^{\frac{n}{2}-2}h(x,y)\leq\frac{C}{\delta^{n-4}}+\frac{C}{\delta^{n-2}}\leq\frac{C}{\delta^{n-2}}, \quad \forall \,\, y\in\Omega.
\end{equation}
Continuing this way, we finally get
\begin{equation}\label{3-5}
  0\leq h(x,y)\leq C\ln\frac{1}{\delta}+\frac{C}{\delta^{n-2}}=:C, \quad \forall \,\, y\in\Omega.
\end{equation}
In conclusions, we have arrive at, for any given $x\in\overline{\Omega^{\delta}}$ and $k=0,\cdots,\frac{n}{2}-1$, there exist constants $C'_{k}$, $C''_{k}\geq0$ such that
\begin{equation}\label{3-5'}
  C'_{k}\leq(-\Delta)^{k}h(x,y)\leq C''_{k}, \quad \forall \,\, y\in\Omega.
\end{equation}

We have the following Lemma on uniform bound of the solution $u=u_{p}$.
\begin{lem}\label{lem1}
Assume $n\geq4$ is even, $m=\frac{n}{2}$, $\Omega$ is strictly convex and let $p_{0}>1$. For every $x\in\overline{\Omega^{\delta}}$ and $p\geq p_{0}$, the solution $u=u_{p}$ satisfies the uniform bound:
\begin{equation*}
  \frac{1}{M}\int_{\Omega}\ln\left(\frac{1}{|x-y|}\right)u^{p}(y)dy\leq C.
\end{equation*}
\end{lem}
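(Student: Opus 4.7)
The plan is to exploit the Green's representation formula. Since $u=u_p\in C^n(\Omega)\cap C^{n-2}(\overline\Omega)$ solves the Navier problem \eqref{NHLE} with $m=\frac{n}{2}$, we have
\begin{equation*}
u(x)=\int_\Omega G(x,y)\,u^p(y)\,dy,
\end{equation*}
and plugging in the decomposition \eqref{3-0} of the Green's function into its fundamental-solution part $C_n\ln(1/|x-y|)$ and the regular part $h(x,y)$, this rearranges to
\begin{equation*}
C_n\int_\Omega \ln\!\frac{1}{|x-y|}\,u^p(y)\,dy \,=\, u(x)+\int_\Omega h(x,y)\,u^p(y)\,dy.
\end{equation*}

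From this identity the lemma is essentially bookkeeping. First I note that the integrand on the left is non-negative: after the normalization already performed we have $\Omega\subseteq B_{1/2}(0)$, so $|x-y|\leq |x|+|y|\leq 1$ for all $x,y\in\Omega$, and hence $\ln(1/|x-y|)\geq 0$. Next I estimate the two terms on the right. The first one is at most $M$ by definition of $M$. For the second one, the inequality \eqref{3-5} (equivalently \eqref{3-5'} with $k=0$) gives a uniform bound $0\leq h(x,y)\leq C$ valid for every $x\in\overline{\Omega^\delta}$ and every $y\in\Omega$, so combined with the $L^p$-type bound from Lemma \ref{lem0}(ii),
\begin{equation*}
\int_\Omega h(x,y)\,u^p(y)\,dy \,\leq\, C\int_\Omega u^p(y)\,dy \,\leq\, C.
\end{equation*}

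Putting the pieces together,
\begin{equation*}
C_n\int_\Omega \ln\!\frac{1}{|x-y|}\,u^p(y)\,dy \,\leq\, M+C,
\end{equation*}
and dividing by $M$, and using the standing assumption $M>2^n$ (so that $C/M$ is bounded), one obtains the desired estimate. I do not anticipate a real obstacle here: the only points that need care are the availability of the Green's representation (which follows from the regularity assumed on $u$ and on $\partial\Omega$), the fact that the regular part $h$ admits a uniform $L^\infty$-bound as $x$ ranges over $\overline{\Omega^\delta}$ (which is exactly what was established in the build-up \eqref{3-1}–\eqref{3-5}), and the positivity of $\ln(1/|x-y|)$ on $\Omega\times\Omega$ secured by the rescaling to $\Omega\subseteq B_{1/2}(0)$.
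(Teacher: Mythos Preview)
Your proof is correct and follows essentially the same approach as the paper: both use the Green's representation $u(x)=\int_\Omega G(x,y)u^p(y)\,dy$, the decomposition \eqref{3-0}, the uniform bound \eqref{3-5'} on the regular part $h$, and Lemma~\ref{lem0}(ii) to conclude $C_n\int_\Omega \ln(1/|x-y|)u^p(y)\,dy\leq M+C$, then divide by $M$. Your explicit remark that $\ln(1/|x-y|)\geq 0$ on $\Omega\times\Omega$ thanks to the normalization $\Omega\subseteq B_{1/2}(0)$ is a welcome clarification that the paper leaves implicit.
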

\begin{proof}
By (ii) in Lemma \ref{lem0}, \eqref{3-5'} and the Green's representation formula, we have, for any $x\in\overline{\Omega^{\delta}}$ and $p\geq p_{0}$,
\begin{eqnarray}\label{3-6}
  M &\geq& u(x)=\int_{\Omega}G(x,y)u^{p}(y)dy \\
  \nonumber &=& C_{n}\int_{\Omega}\ln\left(\frac{1}{|x-y|}\right)u^{p}(y)dy-\int_{\Omega}h(x,y)u^{p}(y)dy \\
  \nonumber &\geq& C_{n}\int_{\Omega}\ln\left(\frac{1}{|x-y|}\right)u^{p}(y)dy-C\int_{\Omega}u^{p}(y)dy \\
  \nonumber &\geq& C_{n}\int_{\Omega}\ln\left(\frac{1}{|x-y|}\right)u^{p}(y)dy-C.
\end{eqnarray}
As a consequence, we get immediately that
\begin{equation}\label{3-6'}
  \frac{1}{M}\int_{\Omega}\ln\left(\frac{1}{|x-y|}\right)u^{p}(y)dy\leq\frac{M+C}{MC}\leq C.
\end{equation}
This finishes our proof of Lemma \ref{lem1}.
\end{proof}

Let $M_{k}:=\max_{\overline{\Omega}}(-\Delta)^{k}u=\|(-\Delta)^{k}u\|_{L^{\infty}(\overline{\Omega})}$ for $k=1,\cdots,\frac{n}{2}-1$. By Lemma \ref{lem0}, the maximum $M_{k}$ can (only) be attained at some point $x_{k}\in\overline{\Omega^{\delta}}:=\{x\in\Omega\,|\,dist(x,\partial\Omega)\geq\delta\}$, that is, $(-\Delta)^{k}u(x_{k})=M_{k}$.

We have the following Lemma which is crucial in our proof.
\begin{lem}\label{lem2}
Assume $n\geq4$ is even, $m=\frac{n}{2}$, $\Omega$ is strictly convex and let $p_{0}>1$. For every $k=1,\cdots,\frac{n}{2}-1$ and $p\geq p_{0}$, we have the following precise bound:
\begin{equation}\label{c-1}
  C''_{k}\frac{M^{\frac{2k}{n}p+(1-\frac{2k}{n})}}{p^{1-\frac{2k}{n}}}\leq M_{k}\leq C'_{k}\frac{M^{\frac{2k}{n}p+(1-\frac{2k}{n})}}{p^{1-\frac{2k}{n}}}.
\end{equation}
Moreover, we have, for any $p\geq p_{0}$,
\begin{equation}\label{c-2}
  0\leq M-u(x)\leq\frac{C}{p}M, \qquad \forall \,\, |x|\leq\frac{\delta}{\sqrt[n]{p}M^{\frac{p-1}{n}}}.
\end{equation}
\end{lem}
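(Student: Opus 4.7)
The plan is to use the blow-up rescaling $\lambda := p^{-1/n}M^{-(p-1)/n}$ and $\tilde u(y) := u(\lambda y)/M$, under which $\tilde u$ satisfies
\[
(-\Delta)^{n/2}\tilde u = \frac{1}{p}\tilde u^p \quad \text{in } \tilde\Omega := \lambda^{-1}\Omega,
\]
with Navier boundary conditions, $\tilde u(0)=1$, $0<\tilde u\leq 1$, and $\mathrm{dist}(0,\partial\tilde\Omega)=\delta/\lambda$ arbitrarily large. The identity $M\lambda^{-2k} = p^{2k/n}M^{(2k/n)p+(1-2k/n)}$ makes \eqref{c-1} equivalent to $\max_{\tilde\Omega}(-\Delta)^k\tilde u\asymp 1/p$ and \eqref{c-2} equivalent to $\tilde u\geq 1-C/p$ on $\{|y|\leq\delta\}$.

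The lower bound in \eqref{c-1} will follow from \eqref{c-2}. Indeed $M_k\geq (-\Delta)^k u(0) = \int_\Omega G_k^*(0,y)u^p(y)\,dy$, where $G_k^*$ is the Navier Green's function of $(-\Delta)^{n/2-k}$ in $\Omega$; the analog of \eqref{3-5'} yields $G_k^*(0,y)\geq c|y|^{-2k}$ on $B_{\lambda\delta}(0)$, while \eqref{c-2} gives $u^p\geq cM^p$ on this ball, producing $(-\Delta)^k u(0)\geq cM^p(\lambda\delta)^{n-2k} = c'\delta^{n-2k}M^{(2k/n)p+(1-2k/n)}/p^{1-2k/n}$ by direct computation. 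The estimate \eqref{c-2} itself is best approached via Taylor's theorem at the maximum point $0$: since $\nabla u(0)=0$ and $\|D^2 u\|_\infty\leq CM_1$ by Schauder estimates applied to the Navier problem, $M - u(x)\leq C|x|^2 M_1$ plus higher-order terms of the schematic form $|x|^{2j}M_j$. Substituting the upper bound of \eqref{c-1} for $M_j$ and evaluating at $|x|=\lambda\delta$, one obtains $M - u(x)\leq C\delta^2 M/p$ via the key exponent cancellation $(\lambda\delta)^{2j}\cdot M^{(2j/n)p+(1-2j/n)}/p^{1-2j/n}=\delta^{2j}M/p$.

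The main obstacle is therefore the upper bound in \eqref{c-1}, which must be proved independently of \eqref{c-2}. At the max point $x_k$ of $(-\Delta)^k u$,
\[
M_k \leq C \int_\Omega \frac{u^p(y)}{|x_k-y|^{2k}}\,dy,
\]
split at radius $\lambda\delta$ from $x_k$. The near part is bounded by $CM^p(\lambda\delta)^{n-2k}$, which matches the target bound exactly. The far part, bounded by $C(\lambda\delta)^{-2k}\int_\Omega u^p\,dx$, matches the target bound only under the refined $L^1$-concentration
\[
\int_\Omega u^p\,dx \lesssim \frac{M}{p},
\]
a quantitative sharpening of Lemma \ref{lem0}(ii) that is the technical crux. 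I expect to derive this refinement by exploiting the identity $M = u(0) = C_n\int_\Omega \ln(1/|y|)u^p(y)\,dy - \int_\Omega h(0,y)u^p(y)\,dy$ together with the uniform bound $|h(0,y)|\leq C$ from \eqref{3-5'}; this forces $\int_\Omega\ln(1/|y|)u^p\,dy\sim M$, and a layer-cake argument at the concentration scale $\rho=\lambda\delta$ (where $\ln(1/\rho)\asymp (p/n)\ln M$) then yields $\int_\Omega u^p\lesssim M/p$, which closes the argument and simultaneously forces $M\lesssim p$.
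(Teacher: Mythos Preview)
Your overall architecture (upper bound for $M_k$ $\Rightarrow$ estimate \eqref{c-2} $\Rightarrow$ lower bound for $M_k$) matches the paper, and your treatment of the lower bound via the Green representation at $x_0=0$ is essentially the paper's argument. There are, however, two genuine gaps.

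\medskip

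\textbf{The upper bound for $M_k$.} Your two-region split at radius $\lambda\delta$ makes the far piece equal to $(\lambda\delta)^{-2k}\int_\Omega u^p$, and matching the target forces the sharp estimate $\int_\Omega u^p\lesssim M/p$. Your proposed ``layer-cake'' derivation does not deliver this. From $\int_\Omega\ln(1/|y|)\,u^p\,dy\leq CM$ one obtains, for $|y|\leq\rho=\lambda\delta$, the bound $\int_{|y|\leq\rho}u^p\leq CM/\ln(1/\rho)\leq CM/p$; but for $|y|\geq\rho$ one only has $\ln(1/|y|)\geq\ln 2$, giving $\int_{|y|\geq\rho}u^p\leq CM$, with no gain of $1/p$. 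The logarithmic weight is large only near the origin, so it cannot by itself control the total mass at the claimed rate. The paper sidesteps this completely with a \emph{three}-region split: an outer radius $r_2=\delta\,p^{\frac{1}{2k}-\frac{1}{n}}M^{-(\frac{p}{n}+\frac{1}{2k}-\frac{1}{n})}$ is chosen precisely so that $r_2^{-2k}$ times the crude uniform bound $\int_\Omega u^p\leq C$ (Lemma~\ref{lem0}(ii)) already matches the target; on the intermediate annulus $\lambda\delta\leq|x_k-y|\leq r_2$ one bounds $|x_k-y|^{-2k}\leq(\lambda\delta)^{-2k}$ and inserts the weight $\ln(1/|x_k-y|)/\ln(1/r_2)\geq 1$, so that Lemma~\ref{lem1} applies and the factor $\ln(1/r_2)\gtrsim p\ln M$ in the denominator supplies the missing $1/p$. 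No refined $L^1$ estimate is needed.

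\medskip

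\textbf{The oscillation estimate \eqref{c-2}.} ``Schauder estimates'' do not give $\|D^2u\|_{L^\infty}\leq CM_1$: the passage from $\|\Delta u\|_{L^\infty}$ to $\|D^2u\|_{L^\infty}$ fails at the endpoint (Calder\'on--Zygmund is false on $L^\infty$), so your Taylor argument is incomplete as stated. The paper instead applies the inhomogeneous Harnack inequality (local maximum principle plus weak Harnack, \cite[Thms.~9.20, 9.22]{GT}) to the nonnegative function $w=u(0)-u$, obtaining directly $\sup_{B_r}w\leq C\bigl(\inf_{B_{2r}}w+r\|\Delta u\|_{L^n(B_{2r})}\bigr)\leq CM_1 r^2$, which is exactly what is needed.
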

\begin{proof}
Since $M_{k}=(-\Delta)^{k}u(x_{k})$ and $x_{k}\in\overline{\Omega^{\delta}}$, by the Green's representation formula and \eqref{3-5'}, we have
\begin{eqnarray}\label{1-0}
  M_{k}&=&(-\Delta)^{k}u(x_{k})=C_{k}\int_{\Omega}\frac{1}{|x_{k}-y|^{2k}}u^{p}(y)dy-\int_{\Omega}(-\Delta)^{k}h(x_{k},y)u^{p}(y)dy \\
  \nonumber &\leq& C_{k}\int_{\Omega}\frac{1}{|x_{k}-y|^{2k}}u^{p}(y)dy.
\end{eqnarray}
Note that $B_{\delta}(x_{k})\subseteq\Omega$. For every $p\geq p_{0}$,
\begin{equation}\label{1-1}
  \int_{|x_{k}-y|\leq\frac{\delta}{p^{\frac{1}{n}}M^{\frac{p-1}{n}}}}\frac{1}{|x_{k}-y|^{2k}}u^{p}(y)dy\leq C_{k}\frac{M^{p}}{p^{1-\frac{2k}{n}}M^{(1-\frac{2k}{n})(p-1)}}=C_{k}\frac{M^{\frac{2k}{n}p+(1-\frac{2k}{n})}}{p^{1-\frac{2k}{n}}},
\end{equation}
and, by (ii) in Lemma \ref{lem0},
\begin{eqnarray}\label{1-2}
  \int_{\Omega\cap\Big\{|x_{k}-y|\geq\frac{p^{\frac{1}{2k}-\frac{1}{n}}\delta}{M^{\frac{p}{n}+(\frac{1}{2k}-\frac{1}{n})}}\Big\}}\frac{1}{|x_{k}-y|^{2k}}u^{p}(y)dy
  &\leq&\left(\frac{M^{\frac{p}{n}+(\frac{1}{2k}-\frac{1}{n})}}{p^{\frac{1}{2k}-\frac{1}{n}}\delta}\right)^{2k}\int_{\Omega}u^{p}(y)dy \\
  \nonumber &\leq& C_{k}\frac{M^{\frac{2k}{n}p+(1-\frac{2k}{n})}}{p^{1-\frac{2k}{n}}}.
\end{eqnarray}
In the case $\frac{1}{p^{\frac{1}{n}}M^{\frac{p-1}{n}}}<\frac{p^{\frac{1}{2k}-\frac{1}{n}}}{M^{\frac{p}{n}+(\frac{1}{2k}-\frac{1}{n})}}$, we can also deduce from Lemma \ref{lem1} that, for every $p\geq p_{0}$,
\begin{eqnarray}\label{1-3}
  && \int_{\frac{\delta}{p^{\frac{1}{n}}M^{\frac{p-1}{n}}}\leq|x_{k}-y|\leq\frac{p^{\frac{1}{2k}-\frac{1}{n}}\delta}{M^{\frac{p}{n}+(\frac{1}{2k}-\frac{1}{n})}}}
  \frac{1}{|x_{k}-y|^{2k}}u^{p}(y)dy\\
 \nonumber &\leq&\left[\frac{1}{M}\int_{\Omega}\ln\left(\frac{1}{|x_{k}-y|}\right)u^{p}(y)dy\right]\frac{M}{\left(\frac{\delta}{p^{\frac{1}{n}}M^{\frac{p-1}{n}}}\right)^{2k}
  \ln\left(\frac{M^{\frac{p}{n}+(\frac{1}{2k}-\frac{1}{n})}}{p^{\frac{1}{2k}-\frac{1}{n}}\delta}\right)} \\
 \nonumber &\leq& C_{k}\frac{M^{1+\frac{2k}{n}(p-1)}p^{\frac{2k}{n}}}{\left(\frac{p}{n}+\frac{1}{2k}-\frac{1}{n}\right)\ln M}
 \leq C_{k}\frac{M^{\frac{2k}{n}p+(1-\frac{2k}{n})}}{p^{1-\frac{2k}{n}}},
\end{eqnarray}
where at the last line we have used $M>\max\left\{2^{n},2^{\frac{2n}{p_{0}-1}}\right\}$. Combining \eqref{1-0}, \eqref{1-1}, \eqref{1-2} and \eqref{1-3}, we get
\begin{equation}\label{1-4}
  M_{k}=(-\Delta)^{k}u(x_{k})\leq C'_{k}\frac{M^{\frac{2k}{n}p+(1-\frac{2k}{n})}}{p^{1-\frac{2k}{n}}}.
\end{equation}

Since $B_{\delta}(0)\subseteq\Omega$ and $u(0)=M$, by \eqref{1-4} with $k=1$ and applying the inhomogeneous Harnack inequality (see Theorems 9.20 and 9.22 in \cite{GT} or Theorem 4.17 in \cite{HL}), we get
\begin{equation}\label{3-21}
  0\leq u(0)-u(x)\leq CM_{1}r^{2}\leq C\frac{M^{\frac{2}{n}p+(1-\frac{2}{n})}}{p^{1-\frac{2}{n}}}r^{2}, \quad \forall \,\, x\in B_{r}(0)
\end{equation}
for all $r\in \left[0,\frac{\delta}{4}\right]$. Indeed, since $B_{4r}(0)\subseteq\Omega$, by Theorem 9.22 in \cite{GT}, we have, there exists a $q$ depending only on $n$ such that
\begin{equation*}
    \left(\frac{1}{|B_{2r}(0)|} \int_{B_{2r}(0)}\left(u(0)-u(x)\right)^{q}dx\right)^{\frac{1}{q}}\leq C\left(\inf_{x\in B_{2r}(0)} \big(u(0)-u(x)\big)+r\|\Delta u\|_{L^{n}(B_{2r}(0))}\right).
\end{equation*}
Combining this with Theorem 9.20 in \cite{GT}, we deduce that
\begin{eqnarray*}
  \sup_{x\in B_r(0)}\left(u(0)-u(x)\right)&\leq& C\left(\left(\frac{1}{|B_{2r}(0)|} \int_{B_{2r}(0)} (u(0)-u(x))^{q}dx\right)^{\frac{1}{q}}+r\|\Delta u\|_{L^{n}(B_{2r}(0))}\right) \\
  &\leq& C\left(\inf_{x\in B_{2r}(0)}\big(u(0)-u(x)\big)+r\|\Delta u\|_{L^{n}(B_{2r}(0))}\right),
\end{eqnarray*}
which yields \eqref{3-21} immediately.

The inequality \eqref{3-21} implies, for any $p\geq p_{0}$,
\begin{equation}\label{3-22}
  0\leq M-u(x)\leq\frac{C}{p}M, \qquad \forall \,\, |x|\leq\frac{\delta}{\sqrt[n]{p}M^{\frac{p-1}{n}}}.
\end{equation}

Combining (ii) in Lemma \ref{lem0}, the Green's representation formula, \eqref{3-5'} and \eqref{3-22} yield that, for any $p\geq p_{0}$,
\begin{eqnarray}\label{3-23}
   M_{k}&\geq& (-\Delta)^{k}u(0)=C_{k}\int_{\Omega}\frac{1}{|y|^{2k}}u^{p}(y)dy-\int_{\Omega}(-\Delta)^{k}h(0,y)u^{p}(y)dy \\
  \nonumber &\geq& C_{k}\int_{|x|\leq\frac{\delta}{p^{\frac{1}{n}}M^{\frac{p-1}{n}}}}
   \frac{1}{|x|^{2k}}\left(1-\frac{C}{p}\right)^{p}M^{p}dx-\widetilde{C}_{k} \\
  \nonumber &\geq& C_{k}M^{p}\int_{0}^{\frac{\delta}{p^{\frac{1}{n}}M^{\frac{p-1}{n}}}}r^{n-1-2k}dr-\widetilde{C}_{k}\\
 \nonumber &\geq&C''_{k}\frac{M^{\frac{2k}{n}p+(1-\frac{2k}{n})}}{p^{1-\frac{2k}{n}}}.
\end{eqnarray}
This concludes our proof of Lemma \ref{lem2}.
\end{proof}

Since $0\in\overline{\Omega^{\delta}}$, combining Lemma \ref{lem1} and Lemma \ref{lem2} yields that, for any $p\geq p_{0}$,
\begin{eqnarray}\label{3-14}
   C&\geq& \frac{1}{M}\int_{|x|\leq\frac{\delta}{M^{\frac{p-1}{n}}p^{\frac{1}{n}}}}
   \ln\left(\frac{1}{|x|}\right)\left(1-\frac{C}{p}\right)^{p}M^{p}dx \\
  \nonumber &\geq& CM^{p-1}\int_{0}^{\frac{\delta}{M^{\frac{p-1}{n}}p^{\frac{1}{n}}}}\ln\left(\frac{1}{r}\right)r^{n-1}dr\\
 \nonumber &\geq&\frac{CM^{p-1}}{M^{p-1}p}\ln\left(\frac{M^{\frac{p-1}{n}}p^{\frac{1}{n}}}{\delta}\right)\\
 \nonumber &\geq& C\ln M,
\end{eqnarray}
which implies immediately the desired uniform a priori estimate:
\begin{equation}\label{3-12}
  M\leq e^{C}.
\end{equation}
This concludes our proof of Theorem \ref{Thm0}.

\end{document}